\newcommand{\f}{\mathbb}
\newcommand{\ol}{\overline}
\newcommand{\cu}{\subseteq}
\newcommand{\serie}[1]{\{#1_{n}\}_n}
\newcommand{\ve}{\varepsilon}
\theoremstyle{definition}
\theoremstyle{plain}
\newtheorem{theorem}{Theorem}
\newtheorem{lemma}{Lemma}
\newtheorem{corollary}{Corollary}
\theoremstyle{remark}
\newtheorem{example}{Example}
\title{Notes on asymptotic eigenvalues distribution on complex circles}
\author{Giovanni Barbarino}
\begin{document}

\maketitle

\section{Preliminary Results and Notations}

A \textit{Matrix-Sequence} $ \serie A $ is an ordered collection of complex matrices such that $ A_n\in \mathbb C^{n\times n} $. 
We will denote by $\mathscr E$ the space of all matrix-sequences, 
\[
\mathscr  E := \{\serie{A} : A_n\in\mathbb C^{n\times n} \}.
\]

It is often observed in practice that matrix-sequences $ \serie A $  arising from the numerical discretization of  linear differential equations possess a \textit{Spectral Symbol} (see \cite{GLT-book} and references therein), that is, a measurable function describing the asymptotic distribution of the eigenvalues of $A_n$  in the Weyl sense \cite{BS,GLT-book,Tilliloc}. We recall that a spectral symbol associated with a sequence $\serie A$  is a measurable function $f:D\cu \mathbb R^q\to \mathbb C$, $q\ge 1$, satisfying 
\[
\lim_{n\to\infty} \frac{1}{n} \sum_{i=1}^{n} F(\lambda_i(A_n)) = \frac{1}{l(D)}\int_D F(f(x)) dx
\]
for every continuous function $F:\mathbb C\to \mathbb C$ with compact support, where $D$ is a measurable set with finite Lebesgue measure $l(D)>0$ and $\lambda_i(A_n)$ are the eigenvalues of $A_n$. In this case we write 
\[ \serie A\sim_\lambda f. \]

Another important and linked concept is the notion of \textit{Spectral Measure}. Given a matrix sequence $\serie A$ and a measure $\mu$ on $\f C$, we say that $\mu$ is the spectral measure associated to $\serie A$ if
\[
\lim_{n\to\infty} \frac{1}{n} \sum_{i=1}^{n} F(\lambda_i(A_n)) = \int_{\f C} F(x) d\mu\qquad \forall F\in C_c(\f C)
\]
and in this case, we can write $\serie A\sim_\lambda \mu$. In \cite{Radon} we showed that the space of sequences that admit a spectral symbol coincide with the space of sequences admitting a spectral measure that is also a probability measure. In particular, since $(\f C,\mu)$ is a Standard Probability Space, the following lemma can be proved.
\begin{lemma}\label{specmea}
	Given any probability measure $\mu$ on $\f C$, there exists a measurable function $k:[0,1]\to \f C$ such that
	\[
	\int_0^1 F(k(x)) dx= \int_{\f C} F(x) d\mu
	\]
	for every $F\in C_c(\f C)$. In particular, the essential range of $k$ is contained in the support of the measure $\mu$, and if $\mu$ is supported on $\f R$ then $k(x)$ can be chosen monotone increasing.
\end{lemma}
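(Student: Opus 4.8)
The plan is to exhibit $k$ as a measurable map transporting the Lebesgue measure on $[0,1]$ onto $\mu$; that is, to construct $k$ so that $k_\ast(\mathrm{Leb}) = \mu$ as measures on $\f C$, where $k_\ast$ denotes the pushforward. Once this is achieved, the desired identity is just the change-of-variables formula for the pushforward: for every $F\in C_c(\f C)$,
\[
\int_0^1 F(k(x))\, dx = \int_{\f C} F\, d(k_\ast \mathrm{Leb}) = \int_{\f C} F\, d\mu.
\]
So the whole problem reduces to producing such a $k$, and the two extra assertions (essential range and monotonicity) will be read off from the construction.

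First I would treat the real case, which also supplies the monotone function. If $\mu$ is supported on $\f R$, let $G(t) = \mu((-\infty,t])$ be its cumulative distribution function; $G$ is non-decreasing and right-continuous. Define the generalized inverse (quantile function)
\[
k(u) = \inf\{t\in\f R : G(t)\ge u\}, \qquad u\in(0,1),
\]
extended arbitrarily on the two null endpoints. Being non-decreasing, $k$ is Borel measurable. The key elementary fact is the equivalence $k(u)\le t \iff u\le G(t)$, valid precisely because $G$ is right-continuous and non-decreasing; from it one gets $\mathrm{Leb}(\{u: k(u)\le t\}) = G(t)$, so $k$ and $\mu$ share the same distribution function and therefore $k_\ast(\mathrm{Leb}) = \mu$. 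This $k$ is monotone increasing, settling the last claim.

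For a general probability measure on $\f C$, I would reduce to the previous case by transporting $\mu$ to the line. Since $\f C\cong\f R^2$ with its Borel $\sigma$-algebra is a standard Borel space, the Borel isomorphism theorem furnishes a bimeasurable bijection $\phi:\f C\to B$ onto a Borel set $B\cu\f R$. Let $\nu = \phi_\ast \mu$, a probability measure on $\f R$, and let $q:[0,1]\to\f R$ be the quantile function of $\nu$ from the previous step, so that $q_\ast(\mathrm{Leb}) = \nu$. Setting $k = \inv{\phi}\circ q$ yields a measurable map with
\[
k_\ast(\mathrm{Leb}) = \inv{\phi}_\ast\, q_\ast(\mathrm{Leb}) = \inv{\phi}_\ast \nu = \inv{\phi}_\ast \phi_\ast \mu = \mu,
\]
as required.

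Finally, the statement about the essential range follows directly from $k_\ast(\mathrm{Leb}) = \mu$: for any open ball $B(z,\ve)$ one has $\mathrm{Leb}(\inv k(B(z,\ve))) = \mu(B(z,\ve))$, so a point $z$ outside $\mathrm{supp}(\mu)$ admits a neighborhood of $\mu$-measure zero, hence a preimage of Lebesgue measure zero, and thus cannot lie in the essential range. I expect the main obstacle to be the clean invocation of the Borel isomorphism theorem — in particular ensuring bimeasurability so that $\inv{\phi}$ is genuinely measurable and the pushforward chain above is legitimate — together with verifying the right-continuity-based equivalence that makes the quantile construction push Lebesgue measure exactly onto $\mu$.
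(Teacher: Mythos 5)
The paper does not actually prove this lemma --- it is imported from \cite{Radon} with only the remark that it holds ``since $(\f C,\mu)$ is a Standard Probability Space'' --- and your argument is a correct, self-contained proof along exactly that line: the quantile (generalized inverse CDF) construction for measures on $\f R$, which also yields the monotone representative, followed by a Borel isomorphism $\f C\to B\cu\f R$ to reduce the general case, with the essential-range claim read off from $k_\ast(\mathrm{Leb})=\mu$. The only point worth tightening is that $q$ must take values in $B$ for $\inv{\phi}\circ q$ to be defined; this holds for Lebesgue-a.e.\ $u$ because $\nu(B)=1$ (and can be sidestepped entirely by taking the Borel isomorphism onto all of $\f R$, which the isomorphism theorem allows since $\f C$ is an uncountable standard Borel space).
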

We want to give some result of existence for the spectral symbols, given particular hypothesis on the eigenvalues of $A_n$.  Here we report two theorems that give necessary and sufficient conditions on the existence of a measure given its moments (also known as \textit{moments problem}).

\begin{theorem}[Hausdorff]\label{Hauss}\cite{moment}
	Given a sequence of real numbers $\{m_n\}_{n}$, let 
	\[
	(\Delta m)_n := m_{n+1} - m_n.
	\]
	We say that the sequence $\{m_n\}_{n}$ is a \textit{Completely Monotonic Sequence} if 
	\[
	(-1)^k(\Delta^k m)_n\ge 0\qquad\forall n,k\in\f N.\]
	$\{m_n\}_{n}$ is a completely monotonic sequence if and only if there exists a measure $\mu$ over $[0,1]$ satisfying
	\[
	m_n = \int_0^1 x^n d\mu \qquad \forall n\in\f N.
	\]
	In this case the determined measure is unique.
\end{theorem}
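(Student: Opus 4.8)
The plan is to prove the two implications separately, disposing of the forward direction (measure $\Rightarrow$ completely monotonic) by a direct computation and reserving the real work for the converse. For the easy direction, suppose $m_n = \int_0^1 x^n\, d\mu$. Applying the operator $\Delta$ to the integrand in the index $n$ gives $(\Delta m)_n = \int_0^1 x^n(x-1)\, d\mu$, and iterating yields the closed form
\[
(\Delta^k m)_n = \int_0^1 x^n (x-1)^k\, d\mu, \qquad\text{so}\qquad (-1)^k(\Delta^k m)_n = \int_0^1 x^n (1-x)^k\, d\mu.
\]
Since $x^n(1-x)^k \ge 0$ on $[0,1]$ and $\mu$ is a positive measure, the right-hand side is nonnegative, which is exactly complete monotonicity. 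I expect this step to be entirely routine.

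For the converse, the idea is to construct a sequence of discrete approximating measures whose nonnegativity encodes precisely the completely monotone hypothesis, and then pass to a weak limit. Concretely, I would set $w_{N,j} := \binom{N}{j}(-1)^{N-j}(\Delta^{N-j} m)_j$ for $0 \le j \le N$; these weights are nonnegative exactly because $\{m_n\}_n$ is completely monotonic. Define the atomic measure $\mu_N := \sum_{j=0}^N w_{N,j}\,\delta_{j/N}$ on $[0,1]$. The heart of the argument is the family of combinatorial (Newton forward-difference) identities
\[
\sum_{j=0}^N j(j-1)\cdots(j-n+1)\, w_{N,j} = N(N-1)\cdots(N-n+1)\, m_n,
\]
established by finite-difference bookkeeping. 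The case $n=0$ shows that each $\mu_N$ is a positive measure of total mass $m_0$, so the family is uniformly bounded.

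With uniform boundedness on the compact set $[0,1]$ in hand, I would invoke Helly's selection theorem (equivalently, weak-$*$ compactness of bounded positive measures on a compact set) to extract a subsequence $\mu_{N_k}$ converging weakly to a positive measure $\mu$. It then remains to identify the moments. Since $x^n$ is continuous on $[0,1]$, weak convergence gives $\int_0^1 x^n\, d\mu = \lim_k \int_0^1 x^n\, d\mu_{N_k}$, while $\int_0^1 x^n\, d\mu_N = \sum_{j=0}^N (j/N)^n w_{N,j}$ can be compared with the exact factorial-moment identity above, the discrepancy between $(j/N)^n$ and $N^{-n}\,j(j-1)\cdots(j-n+1)$ being of order $1/N$ uniformly in $j$. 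This forces $\int_0^1 x^n\, d\mu = m_n$ and establishes existence.

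Finally, uniqueness follows from the Weierstrass approximation theorem: two measures on $[0,1]$ with identical moments integrate every polynomial to the same value, hence every $F \in C([0,1])$ by density, and therefore coincide. I anticipate the main obstacle to be the bookkeeping in the combinatorial moment identity together with the uniform control of the error $|(j/N)^n - N^{-n}\,j(j-1)\cdots(j-n+1)|$ required for moment convergence; by contrast, the weak-compactness extraction and the uniqueness argument are standard.
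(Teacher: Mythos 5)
The paper states this as a classical result quoted from \cite{moment} and gives no proof of its own, so there is nothing internal to compare against; your argument is the standard proof of the Hausdorff moment theorem and it is correct. Both halves go through exactly as you describe: the forward direction from $(-1)^k(\Delta^k m)_n=\int_0^1 x^n(1-x)^k\,d\mu\ge 0$, and the converse from the nonnegative weights $w_{N,j}=\binom{N}{j}(-1)^{N-j}(\Delta^{N-j}m)_j$, the falling-factorial identity $\sum_{j} j(j-1)\cdots(j-n+1)\,w_{N,j}=N(N-1)\cdots(N-n+1)\,m_n$ (which follows from the operator expansion of $I=((I-E)+E)^N$ with $E$ the shift), a Helly/weak-$*$ extraction of a limit measure of mass $m_0$, the uniform $O(1/N)$ comparison of $(j/N)^n$ with $N^{-n}j(j-1)\cdots(j-n+1)$, and Weierstrass density for uniqueness.
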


In order to state the second theorem, we have to recall that a Laurent polynomial is an object in $\f C[z,z^{-1}]$, that is also denoted as $\f C[\f Z]$. This space is a unit $*$-algebra with 
\[
q(z) = \sum_{i=-M}^M a_iz^i\in \f C[\f Z] \implies
q^*(z) = \sum_{i=-M}^M \ol a_iz^{-i}\in \f C[\f Z].
\]
 Given a sequence of complex numbers $s = (s_i)_{i\in \f N}$ and its prolongation to $\f Z$ by $s_{-i}:= \ol s_i$, we can define the linear operator
 \[
q(z) = \sum_{i=-M}^M a_iz^i \implies L_s(q) = \sum_{i=-M}^M a_is_{i}.
\]
\begin{theorem}[Carathéodory–Toeplitz]\label{moments_problem_unit_circle}\cite{moment2}
Given a complex sequence $(s_i)_{i\in \f N}$, the following are equivalent:
\begin{enumerate}
	\item There exists a Radon measure on $\f T$ such that
	\[
	s_k  = \int_{\f T} z^{k} d\mu(z) \qquad \forall k\in\f N.
	\]
	\item $L_s(q^*q)\ge 0$ for all $q\in \f C[\f Z]$.
	\item The infinite Toeplitz matrix $H(s) = (s_{j-i})_{i,j=0}^\infty$ is positive semidefinite.
\end{enumerate}

\end{theorem}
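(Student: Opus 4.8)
The plan is to prove the three conditions equivalent by establishing $(1)\Rightarrow(2)$, the easy equivalence $(2)\Leftrightarrow(3)$, and finally the substantive direction $(2)\Rightarrow(1)$.

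First, for $(1)\Rightarrow(2)$ I would simply compute. Writing $q(z)=\sum_i a_i z^i$, one has $q^*q=\sum_{i,j}a_i\ol{a_j}z^{i-j}$, so that $L_s(q^*q)=\sum_{i,j}a_i\ol{a_j}s_{i-j}$. Substituting $s_{i-j}=\int_{\f T}z^{i-j}\,d\mu$ and using $z^{-j}=\ol{z^j}$ on $\f T$, the double sum collapses to $\int_{\f T}|q(z)|^2\,d\mu\ge 0$. For $(2)\Leftrightarrow(3)$ I would observe that the same computation identifies $L_s(q^*q)$ with the Hermitian form $v^*H(s)v$, where $v$ is the coefficient vector of $q$. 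The one point needing care is that $q$ ranges over $\f C[\f Z]$ (negative powers allowed) whereas $H(s)$ is indexed by $\f N$; this is resolved by noting that multiplying $q$ by a monomial $z^m$ leaves $q^*q$---hence $L_s(q^*q)$---unchanged, so it suffices to let $q$ run over ordinary polynomials in $\f C[z]$. Then the coefficient vectors of polynomials of degree $<N$ are exactly the vectors supported on the first $N$ coordinates, and positive semidefiniteness of the infinite matrix $H(s)$ is by definition equivalent to that of all its finite leading principal submatrices, i.e.\ to $L_s(q^*q)\ge 0$ for every $q$.

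The real content is $(2)\Rightarrow(1)$, and here I would use a Riesz-representation argument. Define a linear functional $\Lambda$ on the space of Laurent polynomials restricted to $\f T$ (the trigonometric polynomials) by $\Lambda(q):=L_s(q)$. The key step---and the main obstacle---is to show $\Lambda$ is positive, i.e.\ $\Lambda(p)\ge 0$ whenever $p$ is a trigonometric polynomial with $p\ge 0$ on $\f T$; this is where I would invoke the Fejér--Riesz factorization, which guarantees $p=g^*g$ for some $g\in\f C[z]$, whence $\Lambda(p)=L_s(g^*g)\ge 0$ by hypothesis. Taking $q=1$ gives $\Lambda(1)=s_0\ge 0$, and a short computation shows $\Lambda(q^*)=\ol{\Lambda(q)}$, so that $\Lambda$ sends real (i.e.\ self-adjoint) trigonometric polynomials to real numbers. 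Positivity then forces the bound $|\Lambda(f)|\le C\|f\|_\infty$, with $C$ a multiple of $s_0$, for every trigonometric polynomial $f$, by applying positivity to $\|f\|_\infty\pm\operatorname{Re}f$ and $\|f\|_\infty\pm\operatorname{Im}f$.

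Since the trigonometric polynomials are dense in $C(\f T)$ by Stone--Weierstrass, $\Lambda$ extends uniquely to a bounded linear functional on $C(\f T)$; positivity passes to the extension by approximating any $f\ge 0$ uniformly by squares $p_n^*p_n=|p_n|^2$ with $p_n\to\sqrt f$ and using $\Lambda(p_n^*p_n)\ge 0$. Finally the Riesz--Markov theorem yields a unique Radon measure $\mu$ on $\f T$ with $\Lambda(f)=\int_{\f T}f\,d\mu$, and specializing to $f=z^k$ gives $s_k=\int_{\f T}z^k\,d\mu$, which is $(1)$. I expect the Fejér--Riesz step to be the crux, since it is precisely what converts positivity on the cone of products $q^*q$ into positivity on the full cone of nonnegative trigonometric polynomials; the remaining boundedness and representation steps are then standard.
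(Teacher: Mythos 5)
The paper does not prove this theorem; it is imported verbatim from Schm\"udgen's monograph \cite{moment2} and used as a black box in Lemma \ref{spectral_symbol_unit_circle}, so there is no internal proof to compare against. Your argument is correct and is the standard one: the computation $L_s(q^*q)=\int_{\f T}|q|^2\,d\mu$ for $(1)\Rightarrow(2)$, the identification of $L_s(q^*q)$ with $v^*H(s)v$ after normalizing $q$ to a genuine polynomial (legitimate since $(z^mq)^*(z^mq)=q^*q$) for $(2)\Leftrightarrow(3)$, and for $(2)\Rightarrow(1)$ the Fej\'er--Riesz factorization of nonnegative trigonometric polynomials followed by the positivity bound $|\Lambda(f)|\le Cs_0\|f\|_\infty$, density of trigonometric polynomials in $C(\f T)$, and Riesz--Markov. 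You correctly identify Fej\'er--Riesz as the crux, since it upgrades positivity on the cone of hermitian squares to positivity on all nonnegative trigonometric polynomials; the only implicit ingredient worth making explicit is that the symmetry $s_{-k}=\ol{s_k}$ built into the paper's definition of $L_s$ is what makes $H(s)$ Hermitian and $\Lambda$ send self-adjoint polynomials to real numbers, both of which your boundedness argument uses.
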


Eventually, we have to recall a pair of classical approximation results that let us approximate  continuous functions over compact sets with polynomials or trigonometric polynomials. 

\begin{theorem}[Stone-Weierstrass]\label{Weierstrass}
	$ $
	\begin{enumerate}
\item 
Suppose $f$ is a continuous real-valued function defined on the real interval $[a,b]$. For every $\ve>0$, there exists a polynomial $p$ such that $\|f-p\|_\infty \le\ve$.
\item 
Suppose $f$ is a continuous complex-valued function defined on the unitary circle $\f T$. For every $\ve>0$, there exists a Laurent polynomial $q$ such that $\|f-q\|_\infty \le\ve$.
	\end{enumerate}
\end{theorem}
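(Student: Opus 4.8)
The plan is to prove the two statements by directly constructing explicit approximating (Laurent) polynomials, the common mechanism in both cases being convolution against a positive approximate identity. For the first statement I would use Bernstein polynomials on $[0,1]$; for the second I would use the Fejér (Cesàro) means of the Fourier series on $\f T$. In both cases the uniform continuity of $f$ on the compact domain, combined with a concentration estimate for the relevant kernel, delivers the required uniform bound.

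For part (1), after the affine change of variables $x\mapsto a+(b-a)x$ it suffices to treat $[0,1]$. I would introduce the Bernstein operator
\[
B_n(f)(x) = \sum_{k=0}^n f\!\left(\tfrac{k}{n}\right)\binom{n}{k}x^k(1-x)^{n-k},
\]
and record the three elementary identities $B_n(1)=1$, $B_n(t)(x)=x$, and $B_n(t^2)(x)=x^2+\tfrac{x(1-x)}{n}$, which combine into the second-moment bound
\[
\sum_{k=0}^n\left(\tfrac{k}{n}-x\right)^2\binom{n}{k}x^k(1-x)^{n-k}=\tfrac{x(1-x)}{n}\le\tfrac{1}{4n}.
\]
Writing $f(x)=B_n(1)(x)\,f(x)$ and subtracting, I would estimate $|B_n(f)(x)-f(x)|$ by splitting the sum according to whether $|k/n-x|<\delta$ or $\ge\delta$: on the first set uniform continuity gives a bound $\ve/2$ uniformly in $x$, while on the second the Chebyshev-type tail bound together with $\|f\|_\infty$ controls the remainder by $\|f\|_\infty/(2n\delta^2)<\ve/2$ for $n$ large. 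This yields $\|B_n(f)-f\|_\infty\to 0$.

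For part (2), identify $f$ with a $2\pi$-periodic continuous function $f(e^{\i\theta})$ and let $c_k=\tfrac{1}{2\pi}\int_{-\pi}^\pi f(e^{\i\theta})e^{-\i k\theta}\,d\theta$ be its Fourier coefficients. The partial sums $S_N f=\sum_{|k|\le N}c_k z^k$ are Laurent polynomials but need not converge uniformly, so I would instead pass to the Cesàro means $\sigma_N f=\tfrac{1}{N+1}\sum_{m=0}^N S_m f$, which are still Laurent polynomials and satisfy $\sigma_N f=f*F_N$ for the Fejér kernel
\[
F_N(\theta)=\tfrac{1}{N+1}\left(\frac{\sin\!\big((N+1)\theta/2\big)}{\sin(\theta/2)}\right)^2.
\]
Using the three defining properties of an approximate identity, namely $F_N\ge 0$, $\tfrac{1}{2\pi}\int_{-\pi}^\pi F_N=1$, and $\int_{\delta\le|\theta|\le\pi}F_N\to 0$ for each fixed $\delta>0$, the standard convolution estimate combined with uniform continuity of $f$ gives $\|\sigma_N f-f\|_\infty\to 0$, and each $\sigma_N f$ is the required Laurent polynomial.

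The main obstacle in both parts is the same concentration/tail estimate: controlling the contribution of the region where the kernel spreads away from the diagonal. For Bernstein this is the second-moment bound $x(1-x)/n\le 1/(4n)$, and for Fejér it is the decay of $\int_{\delta\le|\theta|\le\pi}F_N$, which follows from the closed form via the elementary inequality $\sin(\theta/2)\ge|\theta|/\pi$ on $[-\pi,\pi]$, giving $F_N(\theta)\le\tfrac{\pi^2}{(N+1)\theta^2}$ away from a neighborhood of $0$. Once this estimate is in place, the uniform-continuity half of each argument is routine. Alternatively, both parts follow simultaneously from the abstract Stone--Weierstrass theorem for self-adjoint subalgebras of $C(X)$ with $X$ compact Hausdorff, since real polynomials separate points of $[a,b]$ and Laurent polynomials form a conjugation-closed, point-separating, unital subalgebra of $C(\f T)$, the closure under conjugation being exactly the relation $\ol z=z^{-1}$ valid on $\f T$.
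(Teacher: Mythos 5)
Your proposal is correct. Note, however, that the paper does not prove this statement at all: Theorem \ref{Weierstrass} is recalled as a classical approximation result and used as a black box (in Lemma \ref{specsym} and Lemma \ref{spectral_symbol_unit_circle}), so there is no internal proof to compare against. Your constructive route is sound on its own terms: the Bernstein identities $B_n(1)=1$, $B_n(t)(x)=x$, $B_n(t^2)(x)=x^2+\tfrac{x(1-x)}{n}$ give exactly the Chebyshev-type tail bound needed to combine with uniform continuity in part (1), and for part (2) the Fej\'er kernel's positivity, normalization, and the decay estimate $F_N(\theta)\le \pi^2/((N+1)\theta^2)$ via $\sin(\theta/2)\ge|\theta|/\pi$ are the standard approximate-identity ingredients; each Ces\`aro mean $\sigma_N f$ is indeed a Laurent polynomial in $z=e^{\i\theta}$. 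Two minor points worth making explicit if you write this up in full: in part (1) the statement allows complex-valued $f$ only in part (2), so Bernstein applies directly, but in part (2) you should note that the convolution estimate works verbatim for complex-valued $f$ (or split into real and imaginary parts); and your closing observation that both parts follow from the abstract Stone--Weierstrass theorem (Laurent polynomials form a unital, point-separating, conjugation-closed subalgebra of $C(\f T)$ because $\ol z=z^{-1}$ on $\f T$) is the quickest complete argument and matches the name the paper attaches to the theorem.
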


\begin{theorem}\label{Fou}
	If $f$ is an absolutely continuous real-valued function and $2\pi$-periodic, then the Fourier series of $f$ converges uniformly.
\end{theorem}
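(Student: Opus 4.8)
The plan is to reduce the statement to the classical Dirichlet–Jordan criterion and then upgrade it to uniform convergence. First I would record the two structural consequences of the hypothesis: an absolutely continuous function on a period is continuous and of bounded variation, so $f$ is uniformly continuous on $\f R$ (being continuous and $2\pi$-periodic) and has finite total variation $V$ over one period. Since $f$ is real-valued, Jordan's decomposition writes $f = f_1 - f_2$ with $f_1,f_2$ nondecreasing (and continuous, since $f$ is), which lets me treat the monotone case and recombine by linearity.

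Next I would write the $N$-th partial sum as a convolution against the Dirichlet kernel $D_N(t) = \sum_{n=-N}^{N} e^{\i n t}$, using $\frac{1}{2\pi}\int_{-\pi}^{\pi} D_N = 1$ to subtract off $f(x)$ and symmetrize:
\[
S_N f(x) - f(x) = \frac{1}{2\pi}\int_0^\pi \big(f(x+t)+f(x-t)-2f(x)\big)\,D_N(t)\,dt.
\]
I would then split the integral at a small $\delta>0$. On $[0,\delta]$ the increment $f(x\pm t)-f(x)$ is controlled by the modulus of continuity $\omega(\delta)$, which is uniform in $x$ by uniform continuity, while the kernel contribution is tamed by applying the second mean value theorem for integrals to the monotone pieces $f_1,f_2$; this is where bounded variation enters and produces a bound of the form $C\,\omega(\delta) + (\text{local variation near }x)$. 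On $[\delta,\pi]$ the kernel is a bounded multiple of $\sin((N+\tfrac12)t)$ tested against the $L^1$, bounded-variation weight $(f(x+t)+f(x-t)-2f(x))/\sin(t/2)$, so a Riemann–Lebesgue / integration-by-parts estimate forces this piece to $0$ as $N\to\infty$, uniformly in $x$ because the relevant norms are dominated by the fixed $V$ and $\sup|f|$.

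The main obstacle is exactly the uniformity in $x$ of these two estimates. The delicate point is that the second-mean-value bound on $[0,\delta]$ must be phrased through the total variation of $f$ over $[x,x+\delta]$ rather than through pointwise increments; here absolute continuity pays off, since the variation equals $\int_x^{x+\delta}|f'|$, and absolute continuity of the Lebesgue integral of $f'\in L^1$ makes this quantity smaller than any prescribed $\ve$ simultaneously for all $x$ once $\delta$ is small. Thus I can first fix $\delta$ to control the $[0,\delta]$ part uniformly, then let $N\to\infty$ to kill the $[\delta,\pi]$ part, obtaining $\|S_N f - f\|_\infty \to 0$, which is the claimed uniform convergence. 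An alternative route I would keep in reserve is to prove uniform Cesàro convergence $\sigma_N f \to f$ from Fejér's theorem for the continuous $f$, combine it with the uniform bound $|n\,\wh f(n)|\le V/2\pi$ coming from Riemann–Stieltjes integration by parts, and invoke a uniform Tauberian theorem of Hardy type to pass from Cesàro to ordinary convergence.
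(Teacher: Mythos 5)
The paper states Theorem \ref{Fou} as a recalled classical fact and gives no proof of it, so there is nothing internal to compare you against; judged on its own terms, your argument is correct and is the standard route, namely the uniform form of the Dirichlet--Jordan test. All the essential ingredients are present and in the right order: the symmetrized kernel representation of $S_Nf(x)-f(x)$, the split at $\delta$, the second mean value theorem applied to the monotone pieces of $t\mapsto f(x+t)+f(x-t)-2f(x)$ (which vanishes at $t=0^+$ by continuity, so the resulting bound is controlled by the local variation alone --- your extra $C\,\omega(\delta)$ term is harmless but not needed), and the Riemann--Stieltjes integration by parts on $[\delta,\pi]$ giving an $O(1/N)$ bound with constants depending only on $V$, $\sup|f|$ and $\delta$, hence uniform in $x$. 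You also correctly isolate the one genuinely delicate point, the uniformity in $x$ of the small-$t$ estimate, and your resolution via absolute continuity of the integral of $f'\in L^1$ is valid; it is worth noting that continuity plus bounded variation already suffices here, since the variation function $x\mapsto V_{[a,x]}(f)$ of a continuous BV function is continuous, hence uniformly continuous on a period --- so your argument in fact proves the slightly stronger classical statement for continuous functions of bounded variation, of which the theorem as stated is a special case. The reserve route (Fej\'er plus the bound $|n\widehat f(n)|\le V/2\pi$, or even $n\widehat f(n)\to 0$ from Riemann--Lebesgue applied to $f'$, plus a uniform Hardy-type Tauberian theorem) is also sound and arguably shorter to make rigorous if one is willing to quote the Tauberian theorem. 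The only caution is that, as written, this is a plan rather than a proof: the second-mean-value bookkeeping (splitting $D_N(t)$ into $2\sin((N+\tfrac12)t)/t$ plus a bounded remainder, and tracking that each monotone piece of the symmetrized increment vanishes at $0$) needs to be written out, but there is no gap in the ideas.
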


\section{Polynomial Limits}

From now on, we will work with double indexed sequences $\{x_{i,n}\}_{i,n}$ of real or complex values, where $i,n\in \f N$, $0<i<n$. Usually, we will deal with convergent sequences 
\[
\lim_{n\to\infty}\frac 1n\sum_{i=1}^n F_k(x_{i,n})\in \f C, \quad \forall k \in\f N
\]
for specific class of functions $F_k$ and we want to know if this is enough to conclude that the limit is described asymptotically by a measure, that is
\[
\lim_{n\to\infty}\frac 1n\sum_{i=1}^n F(x_{i,n}) = \int_{\f C} F(x) d\mu 
\]
for every $F\in C_c(\f C)$. In particular, we want to know if it holds when $F(x)$ is locally a polynomial, so that we can use both Weierstrass and Hausdorff theorems.

\begin{lemma}\label{specsym}
	Let $\{x_{i,n}\}_{i,n}$ be a sequence of real numbers with $1\le i\le n$ and 
	\begin{itemize}
		\item $|x_{i,n}| \le M$ for every $i,n$,
		\item $	h_k = \lim_{n\to\infty}\frac 1n \sum_{i=1}^n x_{i,n}^{k}\in \f R\quad \forall k\in \f N$.
	\end{itemize}
	Then there exists a function $k:[0,1]\to [-M,M]$ such that
	\[
	\lim_{n\to\infty} \frac 1n\sum_{i=1}^n F(x_{i,n}) = \int_0^1 F(k(x))dx
	\]
	for every $F\in C[-M,M]$.
\end{lemma}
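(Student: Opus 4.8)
The plan is to run a method-of-moments argument: reinterpret the Cesàro averages as integrals against empirical measures, identify a limit measure by means of the Hausdorff moment theorem (Theorem \ref{Hauss}), upgrade moment convergence to weak convergence through Weierstrass approximation (Theorem \ref{Weierstrass}), and finally invoke Lemma \ref{specmea} to replace the limit measure by a function on $[0,1]$.

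First I would introduce the empirical probability measures $\mu_n := \frac 1n\sum_{i=1}^n \delta_{x_{i,n}}$, each supported on $[-M,M]$, so that $\frac 1n\sum_{i=1}^n F(x_{i,n}) = \int F\,d\mu_n$ and the hypothesis reads $h_k=\lim_n\int t^k\,d\mu_n$. Assuming $M>0$ (the case $M=0$ forces every $x_{i,n}=0$ and is trivial), I would compose with the affine bijection $\phi(t)=(t+M)/(2M)$ carrying $[-M,M]$ onto $[0,1]$ and let $\nu_n$ be the image (push-forward) measure of $\mu_n$ under $\phi$, a probability measure on $[0,1]$. Since $\int s^k\,d\nu_n=\int \phi(t)^k\,d\mu_n$ expands as a fixed finite linear combination of the $\int t^j\,d\mu_n$ with $j\le k$, these moments converge to limits $m_k$, and in particular $m_0=1$.

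The next step, which I regard as the crux, is to show that $\{m_k\}_k$ is a completely monotonic sequence so that Theorem \ref{Hauss} applies. For each fixed $n$ the moment sequence $m_k^{(n)}:=\int_0^1 s^k\,d\nu_n$ of a measure on $[0,1]$ satisfies $(-1)^j(\Delta^j m^{(n)})_k=\int_0^1 s^k(1-s)^j\,d\nu_n\ge 0$ for all $j,k$, by the binomial expansion of $(1-s)^j$. Because $\Delta^j$ is a fixed finite linear combination of shifts, one has $(\Delta^j m^{(n)})_k\to(\Delta^j m)_k$ as $n\to\infty$, and the inequalities $\ge 0$ survive the limit. Hausdorff's theorem then produces a unique measure $\nu$ on $[0,1]$ with $m_k=\int_0^1 s^k\,d\nu$, of total mass $\nu([0,1])=m_0=1$.

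To pass from moments to arbitrary continuous test functions I would use a standard $\ve/3$ estimate: given $F\in C[0,1]$ and $\ve>0$, Theorem \ref{Weierstrass} supplies a polynomial $p$ with $\|F-p\|_\infty\le\ve$; the integrals of $p$ converge by the definition of the $m_k$, while the two remainder terms are each bounded by $\ve$ since $\nu_n$ and $\nu$ are both probability measures, giving $\int F\,d\nu_n\to\int F\,d\nu$. Transporting back through $\phi^{-1}$ yields a probability measure $\mu$ on $[-M,M]$ with $\int G\,d\mu_n\to\int G\,d\mu$ for all $G\in C[-M,M]$, hence $\lim_n\frac 1n\sum_{i=1}^n F(x_{i,n})=\int_{-M}^M F\,d\mu$. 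Finally, regarding $\mu$ as a probability measure on $\f C$ supported in $\f R$, Lemma \ref{specmea} furnishes a measurable, monotone increasing $k:[0,1]\to\f C$ whose essential range lies in $\mathrm{supp}(\mu)\cu[-M,M]$ and which satisfies $\int_0^1 F(k(x))\,dx=\int F\,d\mu$; after adjusting $k$ on a null set so that it genuinely maps into $[-M,M]$, this delivers the claim. Once the limit measure is shown to exist (the complete-monotonicity-under-limits point) and to be unique, the remaining passage through Weierstrass and Lemma \ref{specmea} is routine.
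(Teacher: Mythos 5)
Your proposal is correct and follows essentially the same route as the paper: the same affine rescaling to $[0,1]$, the same complete-monotonicity argument via $(-1)^j(\Delta^j m^{(n)})_k=\int_0^1 s^k(1-s)^j\,d\nu_n\ge 0$ passed to the limit, then Hausdorff, Weierstrass, and Lemma \ref{specmea}. The only differences are cosmetic (phrasing via push-forward empirical measures, plus the welcome explicit handling of $M=0$ and the null-set adjustment of $k$).
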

\begin{proof}
	Let $y_{i,n}$ be a sequence of real numbers belonging to the interval $[0,1]$ such that 
	\[
	y_{i,n} = \frac{x_{i,n}+M}{2M}.
	\]
A simple computation shows us that
	\[
	\frac 1n \sum_{i=1}^n y_{i,n}^k  = \frac{1}{(2M)^k} \sum_{j=0}^k \binom kj M^{k-j} \frac 1n \sum_{i=1}^n x_{i,n}^j \xrightarrow{n\to\infty} \frac{1}{(2M)^k} \sum_{j=0}^k \binom kj M^{k-j} h_j =: l_k\in \f R.
	\]
	Moreover, $\{l_k\}_k$ is a completely monotonic sequence, since $0\le y_{i,n}\le 1$ for all $i,n$, and 
	\[
	(-1)^s(\Delta^s l)_k = \lim_{n\to \infty} \frac 1n \sum_{i=1}^{n} y_{i,n}^k(1-y_{i,n})^s \ge 0\qquad \forall s,k\in \f N.
	\]
	Owing to Hausdorff Theorem, there exists a measure $\mu$ on $[0,1]$ such that
	\[
	\lim_{n\to \infty}\frac 1n \sum_{i=1}^n y_{i,n}^k = l_k = \int_0^1 y^k d\mu, \qquad \forall k\in \f N.
	\]
	Every $G\in C[0,1]$ can be approximated uniformly by polynomials thanks to Theorem \ref{Weierstrass}, so we conclude that  
	\[
	\lim_{n\to \infty} \frac 1n \sum_{i=1}^{n} G(y_{i,n}) =\int_{0}^1 G(y) d\mu
	\]
 holds for every $G\in C_c(\f C)$. Given now $F\in C[-M,M]$, let $G(y):= F(2My-M)$, that is  a continuous function on $[0,1]$. 
 \[
 \lim_{n\to \infty} \frac 1n \sum_{i=1}^{n} F(x_{i,n}) =\lim_{n\to \infty} \frac 1n \sum_{i=1}^{n} G(y_{i,n}) = \int_{0}^1 G(y) d\mu(y) =
 \int_{-M}^M  F(x) d\mu\left( \frac{x+M}{2M} \right),
 \]
 The measure $\mu$ has finite mass $l_0=1$, so it is a probability measure on $[0,1]$, and consequently $\mu\left( \frac{x+M}{2M} \right)$ is a probability measure on $[-M,M]$. By Lemma \ref{specmea} there exists a monotonic increasing and nonnegative function $k:[0,1]\to [-M,M]$ such that 
 \[
  \lim_{n\to \infty} \frac 1n \sum_{i=1}^{n} F(x_{i,n}) = \int_{-M}^M  F(x) d\mu\left( \frac{x+M}{2M} \right) = \int_{0}^1 F(k(x)) dx.
 \]
\end{proof}

A corollary of Lemma \ref{specsym} allows us to find conditions on the matrix sequences for the existence of a spectral symbol. 
Similar questions have already been answered in \cite{TilliOriginal} and \cite{Donatelli}, where the authors explored minimal conditions for a given function $k$ to be the spectral symbol of a given matrix sequence, using the  test functions $x^k$ for the ergodic formula defining the spectral symbol.
The corollary makes use of the same test functions to derive the existence of a spectral symbol for sequences of Hermitian matrices with bounded norm.

\begin{corollary}
	Given a sequence $\serie A$ of Hermitian matrices with $\|A_n\|$ uniformly bounded by $M$, the sequence
	$\serie A$ admits a spectral symbol if and only if 
	\[
	\lim_{n\to\infty}\frac 1n Tr(A_n^k) \in \f R \qquad \forall k\in \f N.
	\]
\end{corollary}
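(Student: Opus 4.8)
The plan is to derive this corollary directly from Lemma \ref{specsym} by translating the spectral data of the Hermitian matrices $A_n$ into the double-indexed real sequence that the lemma consumes. For each $n$, set $x_{i,n} := \lambda_i(A_n)$, the (real) eigenvalues of the Hermitian matrix $A_n$ for $1 \le i \le n$. Since $A_n$ is Hermitian its eigenvalues are real, and since $\|A_n\| \le M$ uniformly we have $|x_{i,n}| = |\lambda_i(A_n)| \le M$ for every $i,n$, which supplies the first hypothesis of the lemma. The key observation linking the moment hypothesis to the trace is that for a Hermitian (indeed any diagonalizable) matrix, $\mathrm{Tr}(A_n^k) = \sum_{i=1}^n \lambda_i(A_n)^k = \sum_{i=1}^n x_{i,n}^k$, so that $\frac 1n \mathrm{Tr}(A_n^k) = \frac 1n \sum_{i=1}^n x_{i,n}^k$.

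For the \emph{sufficiency} direction, I assume $\lim_{n\to\infty}\frac 1n \mathrm{Tr}(A_n^k) \in \f R$ for all $k \in \f N$. By the trace identity above this is exactly the statement that $h_k := \lim_{n\to\infty}\frac 1n \sum_{i=1}^n x_{i,n}^k \in \f R$ exists for all $k$, which is the second hypothesis of Lemma \ref{specsym}. The lemma then produces a function $k:[0,1]\to[-M,M]$ with $\lim_{n\to\infty}\frac 1n\sum_{i=1}^n F(x_{i,n}) = \int_0^1 F(k(x))\,dx$ for every $F \in C[-M,M]$. To conclude that $k$ is a spectral symbol in the sense of the definition, I would extend an arbitrary $F \in C_c(\f C)$ to the compact real interval $[-M,M]$ by restriction; since all eigenvalues lie in $[-M,M]$, the sums $\frac 1n\sum_i F(\lambda_i(A_n))$ only ever evaluate $F$ on this interval, so the convergence from the lemma gives $\lim_{n\to\infty}\frac 1n\sum_{i=1}^n F(\lambda_i(A_n)) = \int_0^1 F(k(x))\,dx$, which is precisely $\serie A \sim_\lambda k$ with domain $D=[0,1]$ and $l(D)=1$.

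For the \emph{necessity} direction, suppose $\serie A$ admits a spectral symbol $f:D\to\f C$. The obstacle here is that the defining ergodic formula is stated only for $F \in C_c(\f C)$, whereas $F(x)=x^k$ is not compactly supported; so I cannot plug in the monomials directly. The fix is a standard truncation: choose a continuous cutoff $\chi$ that equals $1$ on a disk of radius $M$ (containing all eigenvalues, since they lie in $[-M,M]$) and has compact support, and apply the spectral symbol formula to $F_k(x) := x^k \chi(x) \in C_c(\f C)$. Because every $\lambda_i(A_n)$ lies where $\chi \equiv 1$, we have $F_k(\lambda_i(A_n)) = \lambda_i(A_n)^k$, so $\frac 1n\sum_i F_k(\lambda_i(A_n)) = \frac 1n\mathrm{Tr}(A_n^k)$, and the right-hand side converges to the finite complex number $\frac{1}{l(D)}\int_D F_k(f(x))\,dx$. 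Finally I must check this limit is \emph{real}: this follows because $A_n$ Hermitian forces $\mathrm{Tr}(A_n^k)$ to be real for every $n$ and $k$ (the trace of a power of a Hermitian matrix is a sum of real eigenvalue powers), so each term of the sequence is real and hence so is its limit. This completes the equivalence.
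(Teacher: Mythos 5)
Your proof is correct and follows essentially the same route as the paper's: identify $x_{i,n}$ with the eigenvalues so that $\frac 1n Tr(A_n^k)=\frac 1n\sum_i x_{i,n}^k$, invoke Lemma \ref{specsym} for sufficiency, and test the spectral-symbol formula on the monomials for necessity. If anything, you are more careful than the paper in the necessity direction, where you justify the use of the non-compactly-supported test functions $x^k$ via a cutoff and derive realness of the limit from Hermiticity rather than asserting it.
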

\begin{proof}
	Given $x_{i,n}$ the eigenvalues of $A_n$, notice that $|x_{i,n}|\le M$ and $\frac 1n Tr(A_n^k) = \frac 1n \sum_{i=1}^{n} x_{i,n}^k$. Owing to Lemma 	\ref{specsym}, the eigenvalues if the sequence $\serie A$ respects
	\[
	\lim_{n\to\infty}\frac 1n Tr(A_n^k) \in \f R \qquad \forall k\in \f N,
	\]
    then there exists a function $h:[0,1]\to \f R$ such that
    \[
    \lim_{n\to\infty} \frac 1n\sum_{i=1}^n F(x_{i,n}) = \int_0^1 F(h(x))dx
    \]
    for every $F\in C_{c}(\f C)$, that coincides with the definition of spectral symbol for $\serie A$.\\\\    
    Vice versa, if we suppose that $\serie A$ admits a spectral symbol $k$, then
    \[
    \lim_{n\to\infty}\frac 1n \sum_{i=1}^{n} x_{i,n}^h = \frac 1{|D|}\int_D k(x)^hdx\in \f R, \qquad \forall h \in \f N.    \]

\end{proof}

Let us now consider $\lambda_{i,n}$ eigenvalues of $A_n$ on a complex circle, meaning that they have all the same norm. There's a simple way to prove that if the averages of $\lambda_{i,n}^k$ converge, then $\serie A$ admit a spectral measure that is also absolutely continuous with respect to the Lebesgue measure, but only if we assume a decay assumption.

\begin{lemma}\label{abs_continuous_spectral_measure}
	For all $n\in\f N$ consider a set of complex numbers $\{\lambda_{i,n}\}_{i=1:n}$ with polar  representations $\lambda_{i,n} = ce^{\mathrm{i}x_{i,n}}$  such that $x_{i,n}\in [-\pi,\pi]$ and $c\in\f R^+$ does not depend on $i,n$. Suppose moreover that
	\[
	s_k:= \lim_{n\to \infty}\frac{1}{n} \sum_{i=1}^n \lambda_{i,n}^k, \quad \forall k\in \f Z 
	\]
and define the function
\[
M(x) := \sum_{k=-\infty}^{\infty} \frac{s_k}{c^k}e^{-\mathrm ikx}.
\]
If  $\sum_{k=-\infty}^\infty \frac{|s_k|}{c^k} <\infty$, then for every function $F\in C[-\pi,\pi]$ the following ergodic relation holds:
\[
\lim_{n\to\infty} \frac{1}{n}\sum_{i=1}^{n} F(x_{i,n})  = \frac 1{2\pi}\int_{-\pi}^{\pi} F(x)M(x) dx.
\]
\end{lemma}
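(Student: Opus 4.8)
The plan is to check the claimed identity first on the exponential test functions, extend it to all continuous $2\pi$-periodic functions by density, and only at the end deal with the genuinely delicate point, namely the behaviour at the endpoints $\pm\pi$. I would start by normalizing: set $t_k := s_k/c^k$, so that, since $\lambda_{i,n}^k = c^k e^{\i k x_{i,n}}$, the assumption reads $t_k = \lim_{n\to\infty}\frac 1n\sum_{i=1}^n e^{\i k x_{i,n}}$ and $M(x) = \sum_{k\in\f Z} t_k e^{-\i kx}$ with $\sum_k |t_k|<\infty$. By the Weierstrass $M$-test this series converges uniformly, so $M\in C[-\pi,\pi]$ and $\|M\|_\infty\le\sum_k|t_k|$. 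For $F(x)=e^{\i mx}$ the uniform convergence legitimizes interchanging sum and integral, and orthogonality of the characters on $[-\pi,\pi]$ collapses the sum, giving $\frac{1}{2\pi}\int_{-\pi}^\pi e^{\i mx}M(x)\,dx = t_m$; the left-hand side is $t_m$ by the defining limit. By linearity the ergodic formula then holds for every trigonometric (Laurent) polynomial, and Theorem \ref{Weierstrass}(2), read through $z=e^{\i x}$, upgrades this to every continuous $2\pi$-periodic $F$ via a standard $3\ve$ argument, since both sides are bounded by $\|F\|_\infty$ up to the factor $\frac{1}{2\pi}\int|M|$.

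The bridge to non-periodic functions rests on a no-concentration estimate at the endpoints, which is where the summability hypothesis is really used. Writing $E_\delta := [-\pi,-\pi+\delta]\cup[\pi-\delta,\pi]$, I would pick a continuous $2\pi$-periodic trapezoidal bump $g_\delta$ with $0\le g_\delta\le 1$, $g_\delta\ge \mathbf 1_{E_\delta}$, and $\int_{-\pi}^\pi g_\delta\le 4\delta$. Applying the already-proven formula to $g_\delta$ yields
\[
\limsup_{n\to\infty} \frac 1n \#\{i : x_{i,n}\in E_\delta\} \le \lim_{n\to\infty}\frac 1n\sum_{i=1}^n g_\delta(x_{i,n}) = \frac{1}{2\pi}\int_{-\pi}^\pi g_\delta(x) M(x)\,dx \le \frac{2\|M\|_\infty}{\pi}\,\delta .
\]
Conceptually, $\sum_k|t_k|<\infty$ is exactly what forbids an atom of the limiting distribution at the angle $\pm\pi$: such an atom of mass $\rho$ would contribute $\rho(-1)^k$ to every $t_k$ and destroy the absolute summability, so the fraction of points lying in a shrinking neighbourhood of the endpoints must vanish.

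Finally, for a general $F\in C[-\pi,\pi]$ I would fix $\delta>0$ and construct a continuous $2\pi$-periodic $\tilde F$ that agrees with $F$ on $[-\pi+\delta,\pi-\delta]$ and satisfies $\|\tilde F\|_\infty\le\|F\|_\infty$; then $F-\tilde F$ is supported in $E_\delta$ with $|F-\tilde F|\le 2\|F\|_\infty$. The two resulting discrepancies are controlled by the estimate above: the difference of the averages is bounded in $\limsup$ by $2\|F\|_\infty\cdot\frac{2\|M\|_\infty}{\pi}\delta$, while the difference of the integrals is bounded by $2\|F\|_\infty\|M\|_\infty\cdot\frac{2\delta}{2\pi}$. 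Since the formula holds for the periodic $\tilde F$, a triangle inequality gives $\limsup_{n}\bigl|\frac1n\sum_i F(x_{i,n}) - \frac1{2\pi}\int_{-\pi}^\pi F M\bigr| = O(\delta)$, and letting $\delta\to 0$ concludes. The main obstacle I expect is precisely this endpoint passage from periodic to non-periodic $F$: without the no-concentration estimate one cannot exclude that a non-negligible fraction of the $x_{i,n}$ piles up where the angular coordinate is discontinuous on the circle, and it is the hypothesis $\sum_k|s_k|/c^k<\infty$ that rules this out.
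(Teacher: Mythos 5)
Your proposal is correct and follows essentially the same route as the paper: verify the identity on the characters $e^{\i kx}$, extend to continuous periodic functions by Stone--Weierstrass, and then handle general $F\in C[-\pi,\pi]$ via a periodic cutoff together with a no-concentration estimate at $\pm\pi$ obtained by testing the already-proven formula on a trapezoidal bump supported near the endpoints. The paper's proof uses exactly this bump-function argument (its function $H$ is your $g_\delta$), so no further comparison is needed.
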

\begin{proof}
First, we notice that $M(x)$ is well defined and continuous, since the series 
\[
\sum_{k=-\infty}^{\infty} \frac{s_k}{c^k}e^{ikx}
\]
converges uniformly.
The trigonometric polynomials are dense in the space of periodic continuous functions on $[-\pi,\pi]$, so we first test the ergodic relation on functions of the type $e^{\mathrm ikx}$.
\[
\frac{1}{n}\sum_{i=1}^{n} \exp(\mathrm ikx_{i,n}) = \frac{1}{c^kn} \sum_{i=1}^n \lambda_{i,n}^k \xrightarrow{n\to\infty} \frac{s_k}{c^k}
\]
\[
\frac 1{2\pi}\int_{-\pi}^{\pi} \exp(\mathrm ikx)M(x) dx =\frac 1{2\pi} \int_{-\pi}^{\pi} \frac{s_k}{ c^k} dx = \frac{s_k}{c^k}. 
\]
The ergodic relation thus holds for every $F\in C_{per}[-\pi,\pi]$ Let now $G$ be any continuous function on $[\pi,\pi]$ and $F$ a continuous periodic function on the same domain that coincides with $G$ on $[-\pi+\ve,\pi-\ve]$ and such that $\|F\|_\infty \le \|G\|_\infty$. 
\begin{align*}
\left|\frac{1}{n}\sum_{i=1}^{n} G(x_{i,n})  - \frac 1{2\pi}\int_{-\pi}^{\pi} G(x)M(x) dx\right| &\le 
\Bigg|\frac{1}{n}\sum_{i=1}^{n} G(x_{i,n}) - \frac{1}{n}\sum_{i=1}^{n} F(x_{i,n})\Bigg|\\
+ \Bigg| \frac{1}{n}\sum_{i=1}^{n} F(x_{i,n}) &-  \frac 1{2\pi}\int_{-\pi}^{\pi} F(x)M(x) dx\Bigg|
 + \left| \frac 1{2\pi}\int_{-\pi}^{\pi} (F(x)-G(x))M(x) dx\right|\\
 &\le 2\|G\|_\infty \frac{\# \set{i | ||x_{i,n}|-\pi |\le \ve  }   }{n}
 + o_n(1) + 2\|G\|_\infty\|M\|_\infty \frac{\ve}{\pi} 
\end{align*}
All that is left to show is that the first term is proportional to $\ve$. Let $H$ be a periodic continuous nonnegative function on $[-\pi,\pi]$ such that $H=1$ on $[-\pi,-\pi+\ve]\cup [\pi-\ve,\pi]$, $H=0$ on $[-\pi+2\ve,\pi-2\ve]$, and $\|H\|_\infty = 1$. we know that
\begin{align*}
\limsup_{n\to\infty}   \frac{\# \set{i | ||x_{i,n}|-\pi |\le \ve  }   }{n}  \le 
\lim_{n\to\infty} \frac{1}{n}\sum_{i=1}^{n} H(x_{i,n})  = \frac 1{2\pi}\int_{-\pi}^{\pi} H(x)M(x) dx
\le \|M\|_\infty \frac{2\ve}{\pi}.
\end{align*} 
The last relation shows that definitively on $n$,
\[
\left|\frac{1}{n}\sum_{i=1}^{n} G(x_{i,n})  - \frac 1{2\pi}\int_{-\pi}^{\pi} G(x)M(x) dx\right| \le
 2\|G\|_\infty \left(\|M\|_\infty \frac{2\ve}{\pi} +\ve \right)
 + o_n(1) + 2\|G\|_\infty\|M\|_\infty \frac{\ve}{\pi}
\]
holds for every $\ve$, leading to the wanted conclusion.
\end{proof}

In the case when $\sum_{k=-\infty}^\infty \frac{|s_k|}{c^k} =\infty$, there may not be an absolutely continuous spectral measure, but 
 $\serie A$ possesses a probability spectral measure anyway, and thus a spectral symbol.


\begin{lemma}\label{spectral_symbol_unit_circle}
	Suppose $\serie A$ is a sequence of matrices $A_n\in \f C^{n\times n}$ with eigenvalues $\{\lambda_{i,n}\}_{i=1:n}$. Suppose that
	\begin{itemize}
		\item $|\lambda_{i,n}| = c$ is independent from $i,n$,
		\item $s_k= \lim_{n\to \infty}\frac{1}{n} \sum_{i=1}^n \lambda_{i,n}^k\in \f C \quad \forall k\in \f Z $.
	\end{itemize}
	In this case, $\serie A$ admits a spectral symbol.
\end{lemma}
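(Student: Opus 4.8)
The plan is to reduce the problem to the trigonometric moment problem on the unit circle and then invoke the Carathéodory–Toeplitz theorem, in complete analogy with how Lemma \ref{specsym} exploited the Hausdorff theorem in the real case. Since every eigenvalue has modulus $c$, I would write $\lambda_{i,n} = c\,\omega_{i,n}$ with $\omega_{i,n}\in\f T$ and set $t_k := s_k/c^k$. Because $|\omega_{i,n}|=1$ we have $\omega_{i,n}^{-k} = \overline{\omega_{i,n}^k}$, so the normalised moments satisfy $t_k = \lim_n \frac1n\sum_i \omega_{i,n}^k$ for every $k\in\f Z$, together with $t_0 = 1$ and $t_{-k} = \overline{t_k}$; this last identity is exactly the prolongation $s_{-i} := \overline{s_i}$ used to define the operator $L_s$.

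The key step is to verify condition (2) of Theorem \ref{moments_problem_unit_circle} for the sequence $(t_k)$. For a Laurent polynomial $q(z) = \sum_j a_j z^j$ one computes $q^*q = \sum_{j,l}\overline{a_j}a_l z^{l-j}$, hence
\[
L_t(q^*q) = \sum_{j,l}\overline{a_j}a_l\, t_{l-j} = \lim_{n\to\infty}\frac1n\sum_{i=1}^n \Big|\,q(\omega_{i,n})\,\Big|^2 \ge 0,
\]
where the interchange of the finite double sum with the limit is legitimate and the last equality again uses $\overline{\omega^{j}} = \omega^{-j}$ on $\f T$. This is precisely where the hypothesis that all eigenvalues lie on a single circle is indispensable: it turns $L_t(q^*q)$ into an average of squared moduli. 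Carathéodory–Toeplitz then yields a Radon measure $\mu$ on $\f T$ with $t_k = \int_{\f T} z^k\,d\mu$ for all $k\in\f N$ (hence all $k\in\f Z$, using $t_{-k}=\overline{t_k}$); since $\f T$ is compact and $\mu(\f T) = t_0 = 1$, the measure $\mu$ is a probability measure.

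It remains to upgrade the moment identities to the full ergodic relation. For a Laurent polynomial $q$ the relation $\frac1n\sum_i q(\omega_{i,n}) \to \int_{\f T} q\,d\mu$ holds term by term by linearity. Given an arbitrary $G\in C(\f T)$ and $\ve>0$, I would choose, by Theorem \ref{Weierstrass}, a Laurent polynomial $q$ with $\|G-q\|_\infty\le\ve$, and split
\[
\Big|\tfrac1n\textstyle\sum_i G(\omega_{i,n}) - \int_{\f T} G\,d\mu\Big| \le \tfrac1n\textstyle\sum_i|G-q|(\omega_{i,n}) + \Big|\tfrac1n\textstyle\sum_i q(\omega_{i,n}) - \int_{\f T} q\,d\mu\Big| + \int_{\f T}|q-G|\,d\mu.
\]
The outer two terms are bounded by $\ve$ (using that $\mu$ is a probability measure) and the middle one vanishes as $n\to\infty$, so letting $\ve\to0$ gives $\frac1n\sum_i G(\omega_{i,n}) \to \int_{\f T} G\,d\mu$ for every $G\in C(\f T)$.

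Finally I would transport everything back to the circle of radius $c$. Let $\nu$ be the pushforward of $\mu$ under $z\mapsto cz$, a probability measure on $\f C$ supported on $\{|z|=c\}$. For $F\in C_c(\f C)$ the map $G(z) := F(cz)$ is continuous on $\f T$, and since $\lambda_{i,n} = c\,\omega_{i,n}$,
\[
\lim_{n\to\infty}\frac1n\sum_{i=1}^n F(\lambda_{i,n}) = \lim_{n\to\infty}\frac1n\sum_{i=1}^n G(\omega_{i,n}) = \int_{\f T} G\,d\mu = \int_{\f C} F\,d\nu.
\]
Thus $\serie A\sim_\lambda\nu$ with $\nu$ a probability measure, and by the equivalence recalled from \cite{Radon} (or, concretely, by applying Lemma \ref{specmea} to $\nu$ to produce a measurable $k:[0,1]\to\f C$) the sequence $\serie A$ admits a spectral symbol. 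I expect the only delicate point to be the positive-semidefiniteness check in the second step; the rest is a routine density-plus-$3\ve$ argument mirroring Lemma \ref{abs_continuous_spectral_measure}.
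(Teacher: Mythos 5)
Your proposal is correct and follows essentially the same route as the paper: normalise to the unit circle, verify the positivity condition of Carath\'eodory--Toeplitz for the limiting moment sequence, extract a probability measure on $\f T$, extend from Laurent polynomials to all continuous functions by Stone--Weierstrass, and push forward to the circle of radius $c$. The only cosmetic difference is that you check $L_t(q^*q)\ge 0$ by the direct computation $\lim_n \frac1n\sum_i |q(\omega_{i,n})|^2\ge 0$, whereas the paper obtains the same inequality by applying the theorem in the easy direction to the atomic measures $\mu_n$ and passing to the limit.
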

\begin{proof}
	Let $\omega_{i,n}:= \lambda_{i,n}/c$ be complex numbers of unitary norm. If $d_k:= s_k/c^k$, then
	\[
	d_k= \lim_{n\to \infty}\frac{1}{n} \sum_{i=1}^n \omega_{i,n}^k\in \f C, \quad \forall k\in \f Z
	\]
	and moreover $d_{-k} = \ol d_k$. Let $\mu_n$ be  the atomic probability measure on $\f T$ induced by $\omega_{i,n}$ and call $r_{k,n}$ its moments
	\[
	\mu_n := \frac{1}{n} \sum_{i=1}^n \delta_{\omega_{i,n}},\qquad
	r_{k,n} = \int_{\f T} x^k d\mu_n = \frac{1}{n} \sum_{i=1}^n \omega_{i,n}^k.
	\]
	The sequence $r_n:=(r_{k,n})_k$ represents the moments of a measure on the unit circle, so we can apply theorem \ref{moments_problem_unit_circle} and obtain that for every $q\in \f C[\f Z]$, the quantity $L_{r_n}(q^*q)$ is nonnegative. We know that $r_{k,n}$ converges to $d_k$ if $n$ goes to infinity, so we can prove that also $L_{r_n}$ converges punctually to $L_d$.
	\[
	q^*q = \sum_{i=-M}^M a_iz^i \implies 
	L_{r_n}(q^*q) = \sum_{i=-M}^M a_ir_{i,n}\xrightarrow{n\to \infty} \sum_{i=-M}^M a_id_{i} = L_d(q^*q).
	\]
	Since $L_{r_n}(q^*q)$ is nonnegative, also $L_d(q^*q)$ will not be negative, and applying Theorem \ref{moments_problem_unit_circle} again, we obtain that $(d_k)_k$ represents the moments of a measure $\mu$ on $\f T$, that is also a probability measure since $d_0=1$.
	\[
	\lim_{n\to \infty}\frac{1}{n} \sum_{i=1}^n \omega_{i,n}^k = d_k = \int_{\f T} z^k d\mu(z), \qquad \forall k\in \f Z
	\]
	Stone-Weierstrass Theorem \ref{Weierstrass} let us approximate any continuous function on $\f T$ with a Laurent polynomial, so
	\[
	\lim_{n\to \infty}\frac{1}{n} \sum_{i=1}^n G(\omega_{i,n}) = \int_{\f T} G(z) d\mu(z)
	\]
	holds for every $G\in C_c(\f C)$. Going back to the eigenvalues $\lambda_{i,n}$, suppose that $F$ is a continuous function defined on $c\f T$, and $G(x):= F(cx)$. 
	\[
	\lim_{n\to \infty}\frac{1}{n} \sum_{i=1}^n F(\lambda_{i,n}) =  \lim_{n\to \infty}\frac{1}{n} \sum_{i=1}^n G(\omega_{i,n}) = \int_{\f T} G(z) d\mu(z) = \int_{\f T} F(cz) d\mu(z) =  \int_{c\f T} F(z) d\mu\left(\frac zc\right)
	\]
	The last relation let us conclude that $\serie A$ admits a spectral measure, and consequently admits also a spectral symbol.	
\end{proof}

\section{Curves and Weil Systems}

Given a curve $X$ of genus $g$ on a finite field $\f F_q$, we can define its Zeta function as 
\[
Z(t) = \frac{P(t)}{(1-t)(1-qt)}
\]
where $P(t)$ is a polynomial with the following properties (Weil Conjecture):
\begin{itemize}
	\item $P(t)\in\f Z[t]$,
	\item $P(t)=\prod_{i=1}^{2g}(1-\lambda_{i}t) $ where $|\lambda_i|=\sqrt q$ and if $\lambda_i\in \f R$ it has even multiplicity,
	\item There exists natural numbers $B_m$ such that
	\[
	P(t) = (1-t)(1-qt)\prod_{m=1}^\infty (1-t^m)^{-B_m},
	\]
	\item There exist natural numbers $N_m$ such that
	\[
	P(t) = (1-t)(1-qt)\exp\left( \sum_{i=1}^n N_n\frac {t^n}n \right),
	\]
	\item $P(t)$ is the characteristic polynomial of the Frobenius endomorphism induced to the Tate module (or the first \'Etale cohomology group) of the curve.
\end{itemize}
Given this properties one can prove relations between $\lambda_i$, $N_m$ and $B_m$. Namely
\[
N_m = q^m +1 - \sum_{i=1}^{2g} \lambda_i^m = \sum_{d|m} dB_d, 
\qquad
 mB_m = \sum_{d|m} \mu\left(\frac md \right) \left(q^d+1-\sum_{i=1}^{2g} \lambda_i^d\right) = \sum_{d|m}\mu\left(\frac md \right) N_d.
\]
An \textit{asymptotically exact family of Weil systems} is a sequence of curves $\serie X$, where $X_n$ has genus $n$, and such that
\[
\beta_m = \lim_{n\to \infty} \frac{B_m(X_n)}{n}\in \f R
\]
exists for every $m\in \f N$. Thanks to the relations above, we can see that  this condition is equivalent to require that 
\[
\nu_m = \lim_{n\to \infty} \frac{\sum_{i=1}^{2n} \lambda_{i,n}^m}{n}\in \f R
\]
for every $m\in \f N$, where $\lambda_{i,n}$ are the roots of the polynomials $P_n(t)$ associated to $X_n$.

 Here we have all the hypothesis to apply Lemma \ref{spectral_symbol_unit_circle}, with the exception for $A_n\in\f C^{n\times n}$. In fact, $\lambda_{i,n}$ are eigenvalues of the Frobenius endomorphisms, that has elements in the $p$-adic field $\f Q_p$. We can notice, though, that the result does not depend on the matrices $A_n$ themselves, but only on their eigenvalues, and we can conclude nonetheless that there exists a limit measure 
\[
\mu = \lim_{n\to\infty}\mu_n = \lim_{n\to\infty}\frac 1n \sum_{i=1}^{n} \delta_{\lambda_{i,n}}.
\]
In the case of asymptotically exact families, it is possible to give an estimate of $\nu_k$, that follows from the proved bound on $\beta_m$ 
\[
\sum_{m=1}^{\infty} \frac{m\beta_m}{q^{m/2}+1}\le 1.
\] 
In fact, using
\[
-\nu_m= -\lim_{n\to\infty}\frac{\sum_{i=1}^{2g} \lambda_{i,n}^m}{n}  = \lim_{n\to\infty}\frac{N_m(X_n)}{n} =
\lim_{n\to\infty}\sum_{d|m} \frac{dB_d(X_n)}{n} =
\sum_{d|m} d\beta_d\ge 0 
\]
it is easy to prove that
\[  
0\le -\sum_{m=1}^{\infty} \nu_m q^{-m/2} = \sum_{m=1}^{\infty} \frac{m\beta_m}{q^{m/2}-1}\le 1.
\]
Owing to Lemma \ref{abs_continuous_spectral_measure}, we can conclude that the spectral measure $\mu$ has also a density $M(x)$ with respect to the Lebesgue measure.
These results were already proven in \cite{Tsa} and \cite{Tsa2}.

\subsection{Food for Thought}

One can notice that we used only particular cases of the results we proved in Section 2, so one can ask if a generalization of ideas could lead to something more.

For example, consider $\lambda_{i,n}$ complex numbers of the same norm $c$, with $1\le i\le 2n$. It's fairly easy to see that, in analogy with the curves theory, the following assertions hold.
\begin{itemize}
	\item We can define $N_m,B_m$ with
	\[
	N_m := a^{m} + 1 -\sum_{i=1}^{2n}\lambda_{i,n}^m, \qquad mB_m = \sum_{d|m}\mu\left(\frac md \right) N_d
	\]
	where $a\in \f N$.
	\item If $P_n(x) = \prod_{i=1}^{2n} (1-\lambda_{i,n}x)$, then
	\[
	P_n(x) = (1-ax)(1-x)\exp\left( \sum_{k=1}^\infty  N_k\frac {x^k}k \right)
	= (1-ax)(1-x)\prod_{k=1}^\infty (1-x^m)^{-B_m}
	\]
\end{itemize}
 If we require also that the averages of the powers of $\lambda_{i,n}$ converge, then we can apply 
 Lemma \ref{spectral_symbol_unit_circle} 
and conclude that there exists an asymptotic measure for $\lambda_{i,n}$. 

In general $N_m,B_m$ will be complex numbers, so it does not seem an interesting generalization.
We thus consider an ulterior hypothesis that brings the setting closer to Weil systems. If we suppose that $P_n(x) \in \f Z[x]$, then we can prove that $N_m$ are integer numbers using properties of symmetric functions, and therefore $B_m$ are rational numbers. 
The set of asymptotic measures that describe the polynomials $P_n(x)$ strictly contains the measures describing  asymptotically exact family of Weil systems, and in particular, it includes some atomic measures. 

It may thus be interesting to study the exact Weil system through their asymptotic measures by looking at it from a more general context, or considering the spectral symbols associated to the measures.

Eventually, we report an example of $\lambda_{i,n}$ for which $N_m,B_m$ are integers, but the asymptotic measure is yet atomic.

\begin{example}
	Let $\lambda_{i,n} = \sqrt{k} (-1)^i$, where $k$ is a natural number different from 0, and let $a$ be an integer number. In this case, it is easy to see that the asymptotic measure $\mu$ on the circle with ray $\sqrt{k}$ is atomic $\mu = (\delta_{\sqrt{k}} + \delta_{-\sqrt{k}})/2$.

	The polynomial $P_n(x)$ has integer coefficients, since
	\[
	P_n(x) = \prod_{i=1}^n (1-\sqrt{k}x)(1+\sqrt{k}x)
	=  (1-kx^2)^n
	\]
	and thus $N_m$ are integers
	\[
	N_m = \begin{cases}
	a^m + 1 & m \text{ odd,}\\
	a^m +1 -2nk^{m/2} & m \text{ even.}
	\end{cases}
	\]
	Exploiting the properties of Moebius function, we can also prove that $B_m$ is integer, in fact 
	\[
	\sum_{d|m}\mu\left(\frac md \right) N_d \equiv 0\pmod m\qquad \forall m\in \f N.
	\]
	 As one can see from the relations above, $N_m$ is negative when $m$ is even and $n$ is large enough. In fact, $\nu_n$ are all non-negative integers and
	\[
-\sum_{m=1}^{\infty} \nu_m c^{-m}	=\sum_{m=1}^{\infty} \frac{m\beta_m}{c^m -1} = -\infty.
	\]
\end{example}


\begin{thebibliography}{99}
	
	\footnotesize
	
	\bibitem{Radon}
	{\sc Barbarino G.}
	{\em Spectral Measures.} 
	Proceedings of Cortona Meeting, Springer INdAM Series (to appear 2018).
	
	\bibitem{pert}
	{\sc Barbarino G., Serra-Capizzano S.}
	{\em Non-Hermitian perturbations of Hermitian matrix-sequences andapplications to the spectral analysis of approximated PDEs.}  (2018).
	
	\bibitem{BS}
	{\sc B\"ottcher A., Silbermann B.}
	{\em Introduction to Large Truncated Toeplitz Matrices}.
	Springer, New York (1999).
	
		\bibitem{Donatelli}
		{\sc Donatelli M., Neytcheva M., Serra-Capizzano S.}
		{\em Canonical eigenvalue distribution of multilevel block Toeplitz sequences with non-Hermitian symbols.}
		Spectral theory, mathematical system theory, evolution equations, differential and difference equations, Oper. Theory Adv. Appl. 221, Birkhäuser/Springer Basel AG, Basel, (2012) 269--291.
	
	\bibitem{GLT-book}
	{\sc Garoni C., Serra-Capizzano S.}
	{\em Generalized Locally Toeplitz Sequences: Theory and Applications (Volume I).}
	Springer, Cham (2017).
	
		\bibitem{Golinskii}
		{\sc Golinskii L., Serra-Capizzano S.}
		{\em The asymptotic properties of the spectrum of nonsymmetrically perturbed Jacobi matrix sequences.}
		J. Approx. Theory 144 (2007), no. 1, 84–102.
		
		
		\bibitem{moment2}
		{\sc Schmüdgen K.}
		{\em The Moment Problem.}
		Graduate Texts in Mathematics, 277,	Springer International Publishing (2017).
	
	
	
	\bibitem{moment}
	{\sc Shohat, J.A., Tamarkin, J.D.}
	{\em The problem of moments.}
	American Mathematical Society, Mathematical surveys (1943).
	
	
	\bibitem{Tilliloc}
	{\sc Tilli P.}
	{\em Locally Toeplitz sequences: spectral properties and applications.}
	Linear Algebra Appl. 278 (1998) 91--120.
	
	\bibitem{TilliOriginal}
	{\sc Tilli P.}
	{\em Some results on complex Toeplitz eigenvalues.}
	J. Math. Anal. Appl. 239 (1999), no. 2, 390--401.
	
	
	
	\bibitem{Tsa}
	{\sc Tsfasman M.A.}
	{\em Some Remarks on the Asymptotic Number of Points.}
	Lect. Notes Math. 1518 (1970) 178--192.
	

	\bibitem{Tsa2}
	{\sc Tsfasman M.A., Vl\u adu\c t S.G.}
	{\em Asymptotic Properties of Zeta-Functions.}
	J. Math. Sci. 84 (1997), No. 5,  1445--1467.
	
	
	

	
	
\end{thebibliography}
\end{document}